\newtheorem{thm}{Theorem}[section]
\newtheorem{lem}[thm]{Lemma}
\newtheorem{prop}[thm]{Proposition}
\newtheorem{ques}[thm]{Question}
\theoremstyle{definition}
\numberwithin{equation}{section}
\begin{document}


\baselineskip=17pt


\title{Sensitive group actions on regular curves of almost $\leq n$ order}

\author{Suhua Wang \& Enhui Shi \& Hui Xu \& Zhiwen Xie}

\address[S.H. Wang]{Zhangjiagang Campus, Jiangsu University of Science and Technology, Zhangjiagang 215600, P. R. China}
\email{wangsuhuajust@163.com}

\address[E.H. Shi]{School of Mathematical Sciences, Soochow University, Suzhou 215006, P. R. China}
\email{ehshi@suda.edu.cn}

\address[H. Xu]{School of Mathematical Sciences, Soochow University, Suzhou 215006, P. R. China}
\email{20184007001@stu.suda.edu.cn}

\address[Z.W. Xie]{School of Mathematical Sciences, Soochow University, Suzhou 215006, P. R. China}
\email{2893591806@qq.com}

\begin{abstract}
Let $X$ be a regular curve and $n$ be a positive integer such that for every nonempty open set $U\subset X$, there is
a nonempty connected open set $V\subset U$ with the cardinality $|\partial_X(V)|\leq n$. We show that if $X$ admits a sensitive action of
a group $G$, then $G$ contains a free subsemigroup and the action has positive geometric entropy. As a corollary, $X$ admits no sensitive nilpotent group action.
\end{abstract}

\keywords{sensitivity, regular curve, ping pong game, geometric entropy, nilpotent group}
\subjclass[2010]{37B05}

\maketitle

\pagestyle{myheadings} \markboth{Wang-Shi-Xu-Xie }{Sensitive group actions}

\section{Introduction}

Sensitivity is usually regarded as an important character of chaotic systems. Many systems imply sensitivity, such as nontrivial
weak mixing systems, non-equicontinuous minimal systems, and topologically transitive systems with dense minimal points \cite{GW}.
In the definitions of Devaney's chaos and Ruelle-Taken's chaos (also called Auslander-Yorke's chaos),
the notion of sensitivity is a key ingredient \cite{AY,De}. One may consult \cite{AK,LTY,SY,Su,Ye} for some very interesting studies around
various forms of sensitivity.

The notion of expansivity is closely related to the theory of structure stability in differential dynamical systems,
which is stronger than sensitivity. It is well known that hyperbolic automorphisms on torus and full shifts on
symbolic spaces are expansive. Expansive algebraic actions by $\mathbb Z^d$ or amenable groups are also intensively studied \cite{CL, LS}.

Which space can admit an expansive $\mathbb Z$ action is an extensively studied question.
It is well known that the Cantor set, solenoids, and each compact orientable surface of positive genus admit expansive $\mathbb Z$ actions \cite{OR,Wi}.  However, the
circle $\mathbb S^1$ and the $2$-sphere $\mathbb S^2$ admit no expansive $\mathbb Z$ actions \cite{Hir}. A famous result due to Ma\~{n}\'{e} \cite{Ma} says
that if a compact metric space $X$ admit an expansive $\mathbb Z$ action, then $X$ is of finite dimension. One may refer
to \cite{Ka96,KM,Mo02,Mo09} for recent progresses on the study of existence of expansive $\mathbb Z$-actions on $1$-dimensional continua.

T. Ward once asked whether the circle $\mathbb S^1$ can admit an expansive nilpotent group action \cite{KLP}. This question was
solved negatively by C. Connell, A. Furman and S. Hurder in an unpublished paper \cite{CFH} using the ping pong game technique. More strong results by
G. Margulis can be found in \cite{Mar}. These stimulate the following
\begin{ques}\label{ques}
 Given a group $G$ and a continuum $X$, can $G$ act on $X$ expansively/sensitively?
\end{ques}
The answer towards this question certainly depends on the topology of the phase space and involves the algebraic structure of the acting group.
For abelian group $G$, some people showed the nonexistence of expansive
or sensitive $G$ actions on some locally connected continua \cite{MS07, MS12}. Relying on the semi ping pong technique as in \cite{CFH},
Shi and Wang showed the nonexistence of expansive nilpotent group actions on Peano continua with a free dendrite \cite{SW}.

The purpose of this paper is to study sensitive group actions on regular curves. Regular curves are a class of $1$-dimensional
continua, which are natural generalization of graphs and dendrites. Some well-known curves such as the triangular Sierpinski
curve and Apollonius curves are regular. We will introduce the notion of regular curve of almost $\leq n$ order (see Section 2 for
the definition), which covers many important continua including the mentioned as above. The dynamics of continuous maps or homeomorphisms on regular curves
were also studied very recently \cite{Ka06, Ka07, Nag17, Nag20}.

The following is the main theorem we obtained  in this paper. Noting that a regular curve of almost $\leq n$ order may have no free dendrites, such as the
triangular Sierpinski curve, the present theorem is not implied by the results in \cite{SW} even in the case of expansive actions.

\begin{thm}\label{Main Theorem}
Let $X$ be a regular curve of almost $\leq n$ order for some positive integer $n$. Let $G$ be a group
acting on $X$ sensitively. Then $G$ contains a free noncommutative subsemigroup and the action has
positive geometric entropy. In particular, $G$ cannot be nilpotent.
\end{thm}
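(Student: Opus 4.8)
The plan is to exploit sensitivity on a regular curve of almost $\leq n$ order to build, via a ping pong / semi ping pong argument, a free noncommutative subsemigroup inside $G$, and then to upgrade the combinatorics of that construction to a positive lower bound on the geometric entropy of the action. The starting observation is that sensitivity gives a sensitivity constant $c>0$: for every point $x$ and every neighborhood, some group element moves a nearby point at least $c$ apart. The crucial structural input is the hypothesis on $X$: every nonempty open set contains a nonempty connected open set $V$ with $|\partial_X(V)|\leq n$. I would first fix a finite cover of $X$ by sets of diameter $<c$, pick a nonempty connected open set $U_0$ whose closure lies in one cover element and with $|\partial_X(U_0)|\leq n$, and use sensitivity to produce $g\in G$ such that $g(U_0)$ has diameter $\geq c$, hence $g(U_0)$ cannot be contained in any single cover element; combined with connectedness of $g(U_0)$ and the small boundary property, this forces $g(U_0)$ to ``stretch across'' $X$ in a controlled way. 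The bounded-boundary condition is exactly what prevents the images from fragmenting: a connected set with at most $n$ boundary points behaves, for covering purposes, like an arc, so one can iterate.

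The core of the argument is then a ping pong construction. Using the almost $\leq n$ order hypothesis repeatedly, I would locate two (or a bounded family of) disjoint connected open sets $V_1, V_2$ with small boundaries and group elements $g_1, g_2$ such that $g_i$ maps the complement of a small neighborhood of $\partial_X(V_i)$ (or a suitable ``most of $X$'' set) into $V_i$. Because the boundary sets have cardinality $\leq n$, there are only finitely many ``bad'' points to avoid, and a pigeonhole over the at most $n$ boundary points lets us pass to an appropriate subset on which the maps genuinely contract into the $V_i$. The standard ping pong lemma for semigroups (only the forward inclusions $g_i(V_j)\subset V_i$ for all $j$, plus disjointness of the $V_i$, are needed) then yields that the subsemigroup generated by $g_1, g_2$ is free noncommutative. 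I expect the delicate point to be arranging the contraction to be \emph{uniform} enough and the target sets \emph{genuinely disjoint} — one must choose the $V_i$ inside regions separated by the finitely many boundary points, and argue that sensitivity plus connectedness produces images large enough to cover a $V_i$ while still omitting its neighbors; this is where the ``ping pong game technique'' referenced for $\mathbb{S}^1$ in \cite{CFH} has to be adapted to the branching, non-locally-connected setting of a regular curve, and it is the main obstacle.

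Finally, for positive geometric entropy I would count, along words of length $k$ in $g_1, g_2$, the number of distinct ``itineraries'': the ping pong dynamics guarantees that the $2^k$ words $w$ of length $k$ produce sets $w(V)$ that are pairwise distinguished at scale $c$ (each lies in a distinct $V_{i_1}$, and within it in a distinct $g_{i_1}(V_{i_2})$, etc.), so any $(k,c)$-spanning set for the orbit structure must have at least $2^k$ elements on the relevant piece. Dividing $\log 2^k = k\log 2$ by the word-length normalization built into the definition of geometric entropy (entropy computed over balls in the generating set) gives a lower bound of the form $\log 2$ times a positive constant, hence positive geometric entropy. The statement that $G$ cannot be nilpotent is then immediate: a nilpotent group is amenable, and an amenable group contains no free noncommutative subsemigroup (indeed no free noncommutative subgroup, and by a theorem on growth a finitely generated nilpotent group has polynomial growth so cannot contain a free subsemigroup on two generators, which grows exponentially). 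So once the free subsemigroup is produced, the last clause follows with no further work; the whole weight of the proof sits in the ping pong construction enabled by the almost $\leq n$ order hypothesis.
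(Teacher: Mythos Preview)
Your outline identifies the right target (a semi ping pong) and correctly disposes of the entropy and nilpotency conclusions once that is in hand, but the mechanism you propose for producing the ping pong maps is not the one that works, and as you yourself flag, it is precisely the missing piece. You ask for $g_1,g_2$ that send ``most of $X$'' (the complement of a neighborhood of finitely many boundary points) into small disjoint sets $V_i$. Sensitivity gives you \emph{expansion} of small sets, not global contraction, and there is no pigeonhole over the $\leq n$ boundary points that converts one into the other: the image $g(U_0)$ being large says nothing about where $g^{-1}$ sends a generic open set, and a connected open set with $\leq n$ boundary points in a regular curve need not ``behave like an arc'' in any way that yields the inclusions $g_i(V_j)\subset V_i$ you want.

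The paper supplies two ingredients you do not mention. First (Proposition~\ref{transitive open set}), sensitivity on a regular curve forces the existence of a $G$-invariant open set $V$ on which the restricted action is transitive; this is proved by cutting $X$ along a finite set $A$ into pieces of diameter $\leq c$ and observing that some orbit closure $\overline{Ga}$ must then have nonempty interior. Second (Proposition~\ref{convergence continua}), the almost $\leq n$ order hypothesis is used not on a single set but on a \emph{sequence}: choosing $V_m\subset B(x_0,1/m)$ with $|\partial_X(V_m)|\leq n$ around a transitive point $x_0$ and blowing each up by sensitivity to $g_m(V_m)$ of diameter $>c$, one shows (via a Hausdorff-limit argument exploiting the uniform bound on $|\partial_X|$) that infinitely many of the $g_m(V_m)$ contain a common nonempty open set $D$. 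Then $g_{m_k}^{-1}(D)\to\{x_0\}$, and transitivity provides $f,h\in G$ with $f(x_0)\in B_1$, $h(x_0)\in B_2$ for disjoint $B_1,B_2\subset D$; continuity gives $fg_{m_{k_1}}^{-1}(D)\subset B_1$ and $hg_{m_{k_2}}^{-1}(D)\subset B_2$, which is the semi ping pong. The contraction you sought is thus onto a \emph{single transitive point}, obtained by inverting the sensitivity expansion along a sequence of shrinking neighborhoods --- that combination of a transitive subsystem with the ``common open subset'' lemma is the idea your sketch lacks.
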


The idea of the proof is to use ping pong game technique as in \cite{CFH}. However, since the topology of
regular curves is more complicated than that of the circle and sensitivity is weaker than expansivity,
we have to develop some new ideas to overcome these difficulties.
As a supplement to the main theorem, we remark that the solvable group  $\mathbb Z\ltimes \mathbb Z_2$ can act on the closed
interval $[0, 1]$ expansively \cite{SZ}.

One may expect that if a continuum $X$ admits no expansive $\mathbb Z$ actions, then it admits no expansive nilpotent
group actions. However, this is not true in general. According to Ma\~{n}\'{e}'s results in \cite{Ma}, if $X$ is of
infinite dimension, then it admits no expansive $\mathbb Z$ actions; but there do exist an expansive $\mathbb Z^2$ action
on the infinite product of circles $\mathbb T^\infty$ \cite{SZ}. Moreover,  Mouron constructed for each positive integer $n$,
a continuum $X$ which admits an expansive $\mathbb Z^{n+1}$ action but admits no expansive $\mathbb Z^n$ actions \cite{Mo10}. These examples
show the essential differences between expansive $\mathbb Z$ actions and expansive $\mathbb Z^n$ actions with $n\geq 2$.
Up to now, for nilpotent group $G$, the answer to Question \ref{ques} only concentrates on the case that $X$ is locally connected one dimensional continua.
Few is known when $X$ is not locally connected or is of dimension $>1$.

The main theorem of this paper is only a partial answer to the following question.

\begin{ques}\label{question2}
Do there exist  sensitive nilpotent group actions on regular curves?
\end{ques}

Here, we remark that  H. Kato proved that Suslinian continua admit no expansive homeomorphisms in \cite{Ka90}. This  implies that there are no expansive $\mathbb{Z}$ actions on regular curves as every regular curve is a Suslinian continuum. In addition, Mai and Shi constructed a sensitive homeomorphism on a Suslinian continuum in \cite{MS072}, which answered a
question proposed by Kato in \cite{Ka93}. So, the above question is not true if we replace ``regular curves" by ``Suslinian continua".
These motivate us to ask the following question.

\begin{ques}\label{question2}
Do there exist expansive nilpotent group actions on Suslinian continua?
\end{ques}

In Section 2, we will introduce some general notions and facts around group actions and the topology of continua.
We show a convergence property of subcontinua contained in regular curves in Section 3 and obtained in Section 4 a relation between
sensitivity and the existence of transitive open subsystems for group actions on regular curves. Based on these preparations,
we prove in the last section the existence of semi ping pong for any sensitive group actions on regular curves of almost
$\leq n$ order.

\section{Preliminaries}

\subsection{General notions around group actions}

Let $X$ be a compact metric space and let ${\rm Homeo}(X)$ be
the homeomorphism group of $X$. Suppose $G$ is a group. A group
homomorphism $\phi: G\rightarrow {\rm Homeo}(X)$ is called a {\it continuous
action} of $G$ on $X$; we use the symbol $(X, G, \phi)$ to denote this action.
 For brevity, we usually use $gx$ or $g(x)$ instead of $\phi(g)(x)$
and use $(X, G)$ instead of $(X, G, \phi)$ if no confusion occurs.

For a subset $A$ of $X$, we denote by $\overline{A}$ the closure of $A$ in $X$. The diameter of $A$ is defined by ${\rm diam}(A)={\rm sup}\{d(x,y):x,y\in A\}$, where $d$ is the metric on $X$. For $x\in X$,  the {\it orbit} of $x$  is the set $Gx\equiv\{gx:g\in
G\}$; $K\subset X$ is called {\it $G$-invariant} if $Gx\subset K $ for every $x\in K$.
If $K$ is $G$-invariant, we naturally get the {\it restriction action} of $G$ on $K$,
which is denoted by $(K, G|_K)$. An action $(X, G)$ is called {\it topologically  transitive} (or {\it transitive} for brevity) if for any nonempty open subsets $U$ and $V$ of $X$, there is some $g\in G$ such that $g(U)\cap V\neq\emptyset$; is called {\it point transitive}
if the closure ${\overline{Gx}}=X$ for some $x\in X$, and $x$ is called a {\it transitive point}. It is well known that when $X$ is a compact metric space
and $G$ is countable, the definitions of topological transitivity and point transitivity are equivalent. $(X,G)$ is called {\it sensitive} if for some $c>0$ and
for every nonempty open set $U$ in $X$ there is $g\in G$ such that the diameter ${\rm diam}(g(U))>c$, where $c$ is called a {\it sensitivity constant} of $(X,G)$.

\subsection{Geometric entropy and semi ping pong}

A group $G$ is called {\it nilpotent} if there is a normal series of subgroups $G=G_0\rhd G_1\rhd \cdots\rhd G_n=\{e\}$ with $G_{i+1}=[G_i,G]$ for each $i$. Nilpotent groups contain no free subsemigroups. Let $(X, G)$ be an action of group $G$ on space $X$. A tuple $(A, B_1, B_2, g_1, g_2)$ is called a {\it semi ping pong} if $A, B_1, B_2\subset X$ and $g_1, g_2\in G$ such that $B_1\subset A$, $B_2\subset A$, $B_1\cap B_2=\emptyset$,
$g_1(A)\subset B_1$, and $g_2(A)\subset B_2$ (see Fig. 1). It is well known and easy to check that if $(A, B_1, B_2, g_1, g_2)$ is
a semi ping pong, then the semigroup $S(g_1, g_2)$ generated by $g_1$ and $g_2$ is free. This implies immediately that
$G$ cannot be nilpotent. In some sense, the notion of semi ping pong can be viewed as a generalization of horseshoe for
group actions, the existence of which means a kind of complexity of the system.

The notion of geometric entropy was introduced
by Ghys, Langevin, and Walczak in \cite{GLW} for group actions, which is a generalization of topological entropy for systems generated by a single map. Let $G$ be a finitely generated group, and let $S$ be a symmetric generating set of $G$ such that the identity is contained in $S$ and $s\in S$ if and only if $s^{-1}\in S$. For an element $g\in G$, if there exist $s_1,s_2,\cdots, s_n\in S$ such that $g=s_1\cdots s_n$, then we say that $g$ has word length $||g||\leq n$. Let $(X,G,\phi)$ be a group action. Given $\epsilon>0$ and $n>0$, two distinct points $x,y\in X$ are called {\it $(n,\epsilon)$-separated} if there exists some $g\in G$ such that $||g||\leq n$ and $d(gx,gy)>\epsilon$. A finite subset of $X$ is called an {\it $(n,\epsilon)$-separated subset of $X$} if any two distinct points in the set are $(n,\epsilon)$-separated. Denote by $s(\phi, \epsilon,n)$ the maximum cardinality of all $(n,\epsilon)$-separated subsets of $X$. The geometric entropy $h(\phi)$ of a group action $(X,G,\phi)$ is defined as follows:
$$h(\phi)=\lim_{\epsilon\to 0} \left[\limsup_{n\to\infty}\ \frac{\log s(\phi,\epsilon,n)}{n}\right].$$

The positivity of geometric entropy  is also a description of the complexity of the action. The existence of semi ping pong implies
the positivity of geometric entropy  for group actions on compact metric spaces.

\begin{figure}
\centering
\includegraphics[scale=0.5]{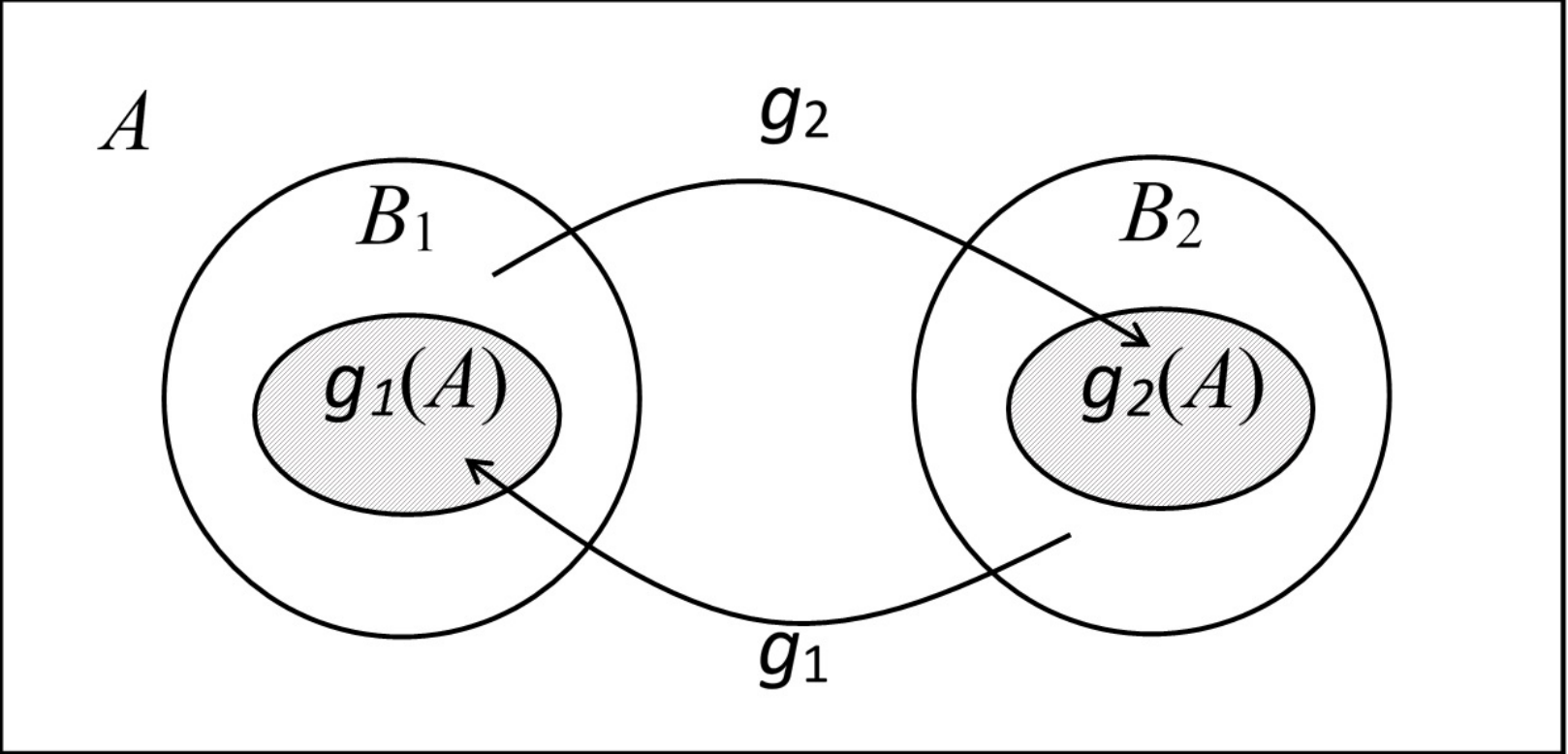}
\centerline{Fig.1. The semi ping pong}
\end{figure}

\subsection{Notions and results in continuum theory}

Recall that a {\it continuum} means a compact connected metric space.
A continuum $X$ is called a
{\it regular curve} if for every $x\in X$ and every open neighborhood $U$ of $x$, there is an open neighborhood
$V$ of $x$ contained in $U$ such that  $|\partial_X(V)|$ (the cardinality of the boundary $\partial_X(V)$ of $V$ in $X$) is finite; if the cardinality$|\partial_X(V)|\leq n$ for some
previously fixed positive integer $n$, then $X$ is called a {\it regular curve of order $\leq n$}.
 A regular curve $X$ is said to be of {\it almost $\leq n$ order} for some fixed positive
integer $n$, if for every nonempty open set $U$ of $X$, there is an open set $V\subset U$ such that $|\partial_X(V)|\leq n$.
Clearly, any regular curve of $\leq n$ order is almost $\leq n$ order by the definition.
It is easy to check that dendrites are regular curves of almost $\leq 2$ order; and the triangular Sierpinski
curve (see Fig. 2) is a regular curve of almost $\leq 3$ order. However, there are many dendrites not of finite order, such as
the infinite star (see Fig. 3) which is the union of countable many arcs with diameters tending to $0$ and with one endpoint being as a common intersecting point. Regular curves are known to be locally connected.

\begin{figure}
\centering
\includegraphics[scale=0.6]{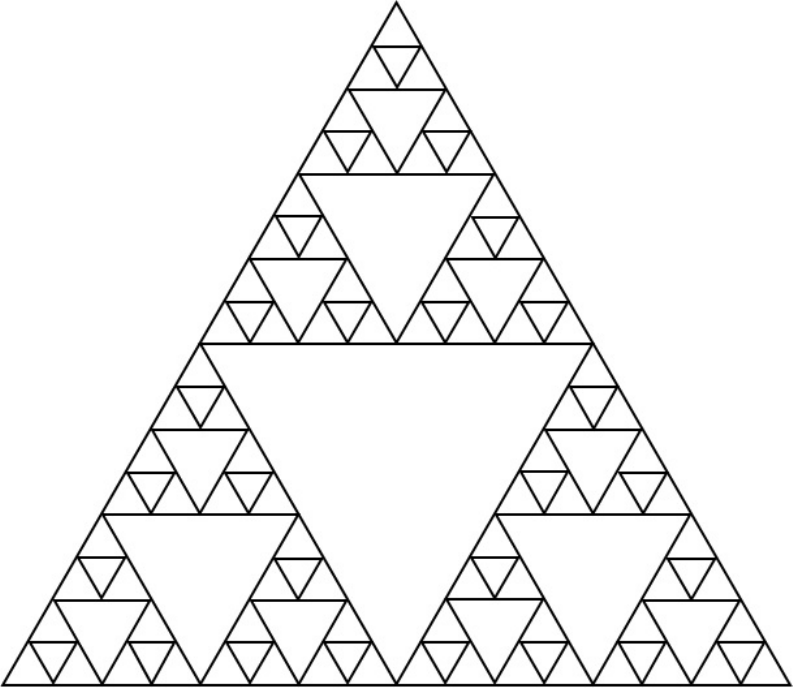} \hspace{1cm}
\includegraphics[scale=0.8]{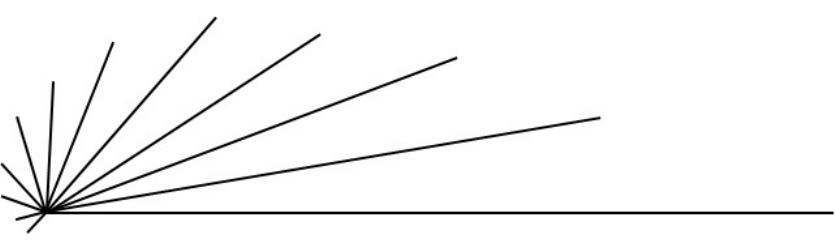}
\centerline{Fig. 2. The triangular Sierpinski curve \hspace{2.5cm} Fig. 3. The infinite star \hspace{2cm}}
\end{figure}

Let $(X,d)$ be a continuum. For every $x\in X$, $B(x,\epsilon)$ denotes the open ball with center $x$ and radius $\epsilon$.
Let $2^X$ be the family of all nonempty closed subsets of $X$, which is called the {\it hyperspace} of $X$. Let
  $C(X)$ be the set of all subcontinua of $X$. For each $\epsilon>0$ and $A\in 2^X$, let $N_d(\epsilon,A)=\{x\in X: d(x,a)<\epsilon \ {\rm for\ some\ } a\in A\}$. For $A,B\in 2^X$, define $d_H(A,B)={\rm inf}\{\epsilon>0: A\subset N_d(\epsilon,B) {\rm \ and\ } B\subset N_d(\epsilon,A)\}$. Then $d_H$ is a metric on  $2^X$ and is called the {\it Hausdorff metric} on it, with respect to which
 $2^X$ and $C(X)$ are  compact metric spaces (see e.g. \cite{Na}).

The following theorem is known as the Boundary Bumping Theorem (see e.g. \cite[p. 73]{Na}) which will be used in the sequel.

\begin{thm}\label{Boundary bumping}
Let $X$ be a continuum and let $U$ be a nonempty proper open subset of $X$. If $K$ is a component of $\overline U$, then $K\cap \partial_X(U)\not=\emptyset.$
\end{thm}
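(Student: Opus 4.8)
The plan is to argue by contradiction and reduce everything to the fact that a continuum has no nontrivial clopen subset. Suppose $K\cap\partial_X(U)=\emptyset$. Since $U$ is open we have $\partial_X(U)=\overline U\setminus U$, so $\partial_X(U)$ is closed and $\overline U=U\cup\partial_X(U)$; consequently the assumption forces $K\subset U$. I would work throughout in the compact metric space $\overline U$ (a closed subset of the compact space $X$) and aim to produce a single clopen subset $C$ of $\overline U$ with $K\subset C$ and $C\cap\partial_X(U)=\emptyset$, hence $C\subset U$. The contradiction will then come from upgrading $C$ to a clopen subset of the connected space $X$.

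To manufacture $C$, I would invoke the classical fact that in a compact Hausdorff space components and quasicomponents coincide; thus, fixing $x\in K$, the component $K$ equals the intersection $\bigcap_\alpha C_\alpha$ of all clopen subsets $C_\alpha$ of $\overline U$ containing $x$. A preliminary observation is that each such $C_\alpha$ in fact contains all of $K$: the sets $K\cap C_\alpha$ and $K\cap(\overline U\setminus C_\alpha)$ are disjoint relatively clopen subsets of $K$ covering $K$, and since $x\in K\cap C_\alpha$, connectedness of $K$ forces $K\cap(\overline U\setminus C_\alpha)=\emptyset$, i.e. $K\subset C_\alpha$. Now, since $K=\bigcap_\alpha C_\alpha$ is disjoint from the compact set $\partial_X(U)$, the closed subsets $C_\alpha\cap\partial_X(U)$ of $\partial_X(U)$ have empty total intersection, so by the finite intersection property finitely many of them already intersect emptily. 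Their finite intersection $C:=\bigcap_{i=1}^m C_{\alpha_i}$ is clopen in $\overline U$, contains $K$, and satisfies $C\cap\partial_X(U)=\emptyset$, whence $C\subset U$.

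It remains to check that $C$ is clopen in $X$. It is closed in $X$ because it is closed in the closed set $\overline U$. For openness, write $C=\overline U\cap W$ with $W$ open in $X$ (possible since $C$ is open in $\overline U$); because $C\subset U$ and $U\subset\overline U$, one computes $C=\overline U\cap W\cap U=U\cap W$, which is open in $X$. Thus $C$ is a clopen subset of the continuum $X$; it is nonempty, since it contains $K$, and proper, since $C\subset U\subsetneq X$ ($U$ being a proper subset). This contradicts the connectedness of $X$, and therefore $K\cap\partial_X(U)\neq\emptyset$.

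I expect the main obstacle to be the middle step: identifying the component $K$ with an intersection of clopen sets and extracting a single such set via compactness. This rests on the coincidence of components and quasicomponents in compact Hausdorff spaces, together with the finite-intersection argument against the compact set $\partial_X(U)$. The final transfer of clopenness from $\overline U$ to $X$ is routine, but it relies essentially on $U$ being open and on $C$ lying inside $U$, which is precisely what the contradiction hypothesis guarantees.
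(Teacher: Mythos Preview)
Your proof is correct. Note that the paper does not actually supply its own proof of this statement; it merely cites it as the well-known Boundary Bumping Theorem from Nadler's textbook \cite[p.~73]{Na}. The argument you give---reducing to the coincidence of components and quasicomponents in the compact space $\overline{U}$, extracting a single clopen set $C\subset U$ via the finite intersection property against the compact set $\partial_X(U)$, and then promoting $C$ to a nontrivial clopen subset of the continuum $X$---is precisely the standard textbook proof, so there is nothing to compare.
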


\section{Topology of regular curves}

\begin{lem}\label{large subcontinua}
Let $X$ be a regular curve and $U$ be a connected open subset of $X$ with $|\partial_X(U)|=n$ for some positive integer $n$.
If ${\rm diam}(U)>\epsilon$ for some $\epsilon>0$, then there is some connected open set $V\subset U$, such that $d(V, \partial_X(U))\geq \epsilon/4n$
and ${\rm diam}(V)\geq \epsilon/4n$.
\end{lem}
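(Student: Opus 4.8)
The plan is to obtain $V$ as a connected component of the open set $U_r:=\{z\in U:d(z,\partial_X(U))>r\}$, where $r:=\epsilon/(4n)$, chosen so as to contain a point of $U$ lying well away from $\partial_X(U)$, and then to show that no component of this type can be small. Write $\partial_X(U)=\{p_1,\dots,p_n\}$ (observe $U\neq X$, since $n\geq1$), and for $p\in X$ and $\rho>0$ put $\overline{B}(p,\rho):=\{z\in X:d(z,p)\leq\rho\}$.

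I would begin with a purely metric observation: \emph{if a connected set $Y\subseteq X$ is covered by $n$ closed balls of radius $\rho$, then $\operatorname{diam}(Y)\leq 2n\rho$.} To prove it, form the graph on the $n$ centres having an edge between two of them exactly when the corresponding closed balls intersect (equivalently, when the centres lie within $2\rho$ of one another). The unions of the closed balls over the distinct connected components of this graph are finitely many, closed, and pairwise disjoint, so each of them is relatively clopen in their union; hence the connected set $Y$ lies inside a single one of them. Within such a union any two points are linked by a chain of at most $n$ successively overlapping balls (the two endpoints contribute $\rho$ each, and the at most $n-1$ overlaps contribute at most $2\rho$ each), which bounds their distance by $2n\rho$.

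Applying this with $\rho=2r$, I first claim there exists $x\in U$ with $d(x,\partial_X(U))>2r$. Otherwise $U$, and hence its closure $\overline{U}$ (a connected set), would be covered by the $n$ closed balls $\overline{B}(p_i,2r)$, so that $\operatorname{diam}(U)=\operatorname{diam}(\overline{U})\leq 2n\cdot 2r=4nr=\epsilon$, contradicting $\operatorname{diam}(U)>\epsilon$. Fix such an $x$ and let $V$ be the component of $U_r$ containing $x$, which makes sense because $d(x,\partial_X(U))>2r>r$. Since $X$ is locally connected, $U_r$ is open and all its components are open, so $V$ is a connected open subset of $U$ with $d(V,\partial_X(U))\geq r=\epsilon/(4n)$. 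It remains to check $\operatorname{diam}(V)\geq r$. Suppose instead $\operatorname{diam}(V)<r$. Then $V\neq U$ (as $\operatorname{diam}(U)>\epsilon>r$), and as $V$ is open in $U$ and nonempty it cannot be closed in the connected set $U$; choose $z\in(\overline{V}\cap U)\setminus V$. Then $z\notin U_r$, for otherwise the component of $z$ in $U_r$ would be an open neighbourhood of $z$ meeting $V$, hence equal to $V$, forcing $z\in V$. Therefore $d(z,\partial_X(U))\leq r$, so $d(z,p_i)\leq r$ for some $i$; and since $z\in\overline{V}$ with $\operatorname{diam}(V)<r$, every $v\in V$ satisfies $d(v,z)\leq\operatorname{diam}(V)<r$, hence $d(v,p_i)<2r$. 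In particular $d(x,p_i)<2r$, contradicting $d(x,\partial_X(U))>2r$. Thus $\operatorname{diam}(V)\geq\epsilon/(4n)$, and $V$ is as required.

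The only genuine difficulty is arranging the constant, and the argument shows precisely how the factor $4n$ is apportioned: a factor $2n$ is used up in the covering/clustering step that produces the point $x$ far from $\partial_X(U)$, while the remaining factor $2$ is exactly the gap between the radius $2r$ that quantifies ``$x$ is far from $\partial_X(U)$'' and the radius $r$ that defines $U_r$ --- and that gap is what turns the smallness hypothesis $\operatorname{diam}(V)<r$ into the impossible inequality $d(x,p_i)<2r$. Note also that only local connectedness of $X$ (not the full strength of being a regular curve) is needed here, and it is used solely to guarantee that the component $V$ of $U_r$ is open.
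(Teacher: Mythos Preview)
Your proof is correct. Both your argument and the paper's begin the same way: a chaining estimate over the $n$ boundary points shows that $U$ cannot be covered by balls of radius $\epsilon/(2n)$ about $\partial_X(U)$, producing a point $x\in U$ with $d(x,\partial_X(U))\geq \epsilon/(2n)=2r$.

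The genuine difference lies in the construction of $V$ and the lower bound on its diameter. The paper takes $V$ to be the component of $U\cap B(x,\epsilon/4n)$ containing $x$, and then invokes the Boundary Bumping Theorem to see that $\overline{V}$ must meet $\partial_X(B(x,\epsilon/4n))$, forcing $\operatorname{diam}(V)\geq \epsilon/4n$. You instead take $V$ to be the component through $x$ of the level set $U_r=\{z\in U:d(z,\partial_X(U))>r\}$, and argue directly: if $\operatorname{diam}(V)<r$, a boundary point $z\in(\overline{V}\cap U)\setminus V$ must fall outside $U_r$, which drags $x$ within $2r$ of some $p_i$. Your route is slightly more elementary in that it avoids the Boundary Bumping Theorem entirely and uses only local connectedness; the paper's route is a bit more geometric, pinning $V$ inside a concrete ball. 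Either way the constant $\epsilon/(4n)$ falls out of the same two-stage budget you describe.
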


\begin{proof}
Let $\partial_X(U)=\{e_1, e_2, \cdots, e_n\}$. We claim that $U\setminus\bigcup_{i=1}^{n}B(e_i,\epsilon/2n)\neq\emptyset$. In fact, if $U\subset\bigcup_{i=1}^{n}B(e_i,\epsilon/2n)$, then for any two distinct points $a,b\in U$, by the connectivity of $U$ there are finite $B(e_{i_1},\epsilon/2n), \cdots, B(e_{i_m},\epsilon/2n)$ ($m\leq n$) such that $a\in B(e_{i_1},\epsilon/2n)$, $b\in B(e_{i_m},\epsilon/2n)$ and $B(e_{i_k},\epsilon/2n)\cap B(e_{i_{k+1}},\epsilon/2n)\neq\emptyset$ for $1\leq k\leq m-1$. Thus we have
\begin{align*}
d(a,b)\leq & d(a,e_{i_1})+d(e_{i_1},e_{i_2})+\cdots +d(e_{i_{m-1}},e_{i_m})+d(e_{i_m},b)\\
< & \frac{\epsilon}{2n}+\frac{\epsilon}{n}\cdot(m-1)+\frac{\epsilon}{2n}\\
\leq & \epsilon.
\end{align*}
It follows that ${\rm diam}(U)\leq\epsilon$, which is a contradiction. Hence $U\setminus\bigcup_{i=1}^{n}B(e_i,\epsilon/2n)\neq\emptyset$.

Take a point $x\in U\setminus\bigcup_{i=1}^{n}B(e_i,\epsilon/2n)$. Then $d(x,\partial_X(U))\geq \epsilon/2n$, and hence
$$d(B(x,\epsilon/4n), \partial_X(U))\geq \epsilon/4n.$$
 Let $V$ be the component of $U\cap B(x,\epsilon/4n)$ which contains $x$. Then $d(V,\partial_X(U))\geq\epsilon/4n$. Let $W=U\cap B(x,\epsilon/4n).$
Since $U$ is connected, $\emptyset\not= \partial_X(W)\subset\partial_X(B(x,\epsilon/4n))$.
This together with Theorem \ref{Boundary bumping} implies $\emptyset\not= \partial_X(V)\subset \partial_X(B(x,\epsilon/4n))$.
 So, ${\rm diam}(V)={\rm diam}(\overline{V})\geq \epsilon/4n$.
\end{proof}


\begin{prop}\label{convergence continua}
Let $X$ be a regular curve and let $(U_i)_{i=1}^\infty$ be a sequence of connected open subsets of $X$ with $|\partial_X(U_i)|=n$ for some positive integer $n$ and for each $i$. Suppose that there is some $\epsilon>0$ with ${\rm diam}(U_i)>\epsilon$ for each $i$. Then there are a nonempty open subset $W$ of $X$ and infinitely many $i$'s such that $W$ is contained in $U_i$.
\end{prop}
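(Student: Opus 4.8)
The plan is to first use Lemma~\ref{large subcontinua} to extract from each $U_i$ a piece that stays a definite distance away from $\partial_X(U_i)$, then pass to a Hausdorff limit of these pieces and exploit local connectedness of $X$ to trap one small connected open set inside infinitely many of the $U_i$. Concretely, put $\delta=\epsilon/(4n)$. For each $i$, since $U_i$ is connected open with $|\partial_X(U_i)|=n$ and ${\rm diam}(U_i)>\epsilon$, Lemma~\ref{large subcontinua} supplies a connected open set $V_i\subset U_i$ with $d(V_i,\partial_X(U_i))\geq\delta$ and ${\rm diam}(V_i)\geq\delta$. Passing to closures, $\overline{V_i}\in C(X)$, $d(\overline{V_i},\partial_X(U_i))\geq\delta$, and ${\rm diam}(\overline{V_i})\geq\delta$.

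\emph{A limit continuum and a test neighborhood.} Since $(C(X),d_H)$ is compact, choose a subsequence $(i_k)$ with $\overline{V_{i_k}}\to C$ in the Hausdorff metric for some $C\in C(X)$; since ${\rm diam}$ is $d_H$-continuous, ${\rm diam}(C)\geq\delta$, so $C$ is a nonempty subcontinuum. Fix $x\in C$. Because regular curves are locally connected, choose a connected open neighborhood $W$ of $x$ with ${\rm diam}(W)<\delta/2$, together with $r>0$ such that $B(x,r)\subset W$. This $W$ will be the required open set.

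\emph{Trapping $W$.} Fix $k$ large. Because $\overline{V_{i_k}}\to C\ni x$, for all large $k$ there is a point $q_k\in\overline{V_{i_k}}$ with $d(x,q_k)<\min\{\delta/2,r\}$. Then $q_k\in B(x,r)\subset W$; and since $q_k\in\overline{V_{i_k}}\subset\overline{U_{i_k}}$ while $d(q_k,\partial_X(U_{i_k}))\geq\delta>0$, the point $q_k$ lies in $U_{i_k}$, so $W\cap U_{i_k}\neq\emptyset$. On the other hand, for every $w\in W$ we have $d(w,q_k)\leq d(w,x)+d(x,q_k)<\delta/2+\delta/2\leq d(q_k,\partial_X(U_{i_k}))$, hence $w\notin\partial_X(U_{i_k})$; thus $W\cap\partial_X(U_{i_k})=\emptyset$. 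Since $X\setminus\partial_X(U_{i_k})$ is the disjoint union of the open sets $U_{i_k}$ and $X\setminus\overline{U_{i_k}}$, and $W$ is connected and meets $U_{i_k}$, we conclude $W\subset U_{i_k}$. As this holds for all large $k$, $W\subset U_i$ for infinitely many $i$.

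\emph{Main obstacle.} The crux is the last step: producing a single $W$ that is contained in infinitely many $U_{i_k}$ simultaneously. Two ingredients make it work. The uniform lower bound $\delta$ for $d(V_i,\partial_X(U_i))$ from Lemma~\ref{large subcontinua} lets a small‑diameter $W$ miss every boundary $\partial_X(U_{i_k})$; Hausdorff convergence then forces $W$ to actually meet $V_{i_k}\subset U_{i_k}$; and local connectedness of $X$ is what allows $W$ to be chosen connected, so that ``misses the boundary and meets the set'' upgrades to ``contained in the set''. Working directly with the $U_i$ rather than first shrinking to $V_i$ would be fatal, since the limit point $x$ could then lie on the boundaries $\partial_X(U_{i_k})$ and no neighborhood of it would sit inside $U_{i_k}$.
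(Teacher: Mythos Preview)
Your proof is correct. Both arguments begin identically—apply Lemma~\ref{large subcontinua} to obtain $V_i\subset U_i$ with $d(V_i,\partial_X(U_i))\geq\delta$ and ${\rm diam}(V_i)\geq\delta$, then pass to a Hausdorff-convergent subsequence $\overline{V_{i_k}}\to C$—but they diverge in how the common open set $W$ is produced. The paper takes a small regular neighborhood $Q$ of a point of $C$ with \emph{finite} boundary, uses a pigeonhole on $\partial_X(Q)$ to find a single point $p$ lying in infinitely many $W_{i_k}$, sets $W=W_{i_N}$ for one such index, and then needs an additional subsequence making the sets $\partial_X(U_{i_k})$ pairwise $\delta$-close in $d_H$ to rule out $W$ meeting later boundaries. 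Your route is more direct: you take $W$ to be any connected open neighborhood of a point of $C$ with ${\rm diam}(W)<\delta/2$, and the single inequality $d(\overline{V_{i_k}},\partial_X(U_{i_k}))\geq\delta$ immediately forces $W\cap\partial_X(U_{i_k})=\emptyset$ for large $k$, with connectedness of $W$ finishing the job. Your argument uses only local connectedness of $X$ in this second phase (not the finite-boundary property a second time) and avoids both the pigeonhole step and the extra subsequence on the boundaries; the paper's version, on the other hand, yields a $W$ of diameter at least $\delta$, though that extra information is not used downstream.
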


\begin{proof}
For each $i$, it follows from Lemma~\ref{large subcontinua} that there is a connected open subset $W_i\subset U_i$ with $d(W_i, \partial_X(U_i))\geq \epsilon/4n$ and ${\rm diam}(W_i)\geq \epsilon/4n$. By the compactness of $2^X$ and $C(X)$, there are subsequences $(\overline{W_{i_k}})$ and $(\partial_X(U_{i_k}))$ such that $(\overline{W_{i_k}})$ converges to a subcontinuum $A$, and
$$d_H(\partial_X(U_{i_{k_1}}), \partial_X(U_{i_{k_2}}))<\epsilon/4n, \forall k_1\not=k_2.\eqno(3.1)$$
Take a point $z\in A$. Then there exists a connected open neighborhood $Q$ of $z$ such that $\partial_X(Q)$ is finite and ${\rm diam}(Q)<\epsilon/4n$. Since $(\overline{W_{i_k}})$ converges to $A$, there exists a positive integer $N$ such that $W_{i_k}\cap Q\neq\emptyset$ for each $k\geq N$. Noting that  ${\rm diam}(W_{i_k})\geq \epsilon/4n$ and ${\rm diam}(Q)<\epsilon/4n$, we have $W_{i_k}\nsubseteq Q$. Thus $W_{i_k}\cap \partial_X(Q)\neq\emptyset$ by the connectivity of $W_{i_k}$.
Hence, there exist a point $p\in \partial_X(Q)$ and infinitely many $k$'s such that $p\in W_{i_k}$. Passing to a subsequence if necessary, we may suppose that $p\in W_{i_k}$ for each $k\geq N$.

Let $W=W_{i_N}$. To complete the proof, we only need to show that $W\subset U_{i_k}$ for all $k\geq N$. Otherwise, there is some $k'\geq N$ with $W\nsubseteq U_{i_{k'}}$.  Since $W$ is connected and $p\in W\cap U_{i_{k'}}$, there is a point $e\in W\cap\partial_X(U_{i_{k'}})$.  By $(3.1)$,  there is $e'\in \partial_X(U_{i_N})$ such that $d(e,e')<\epsilon/4n$. Then we have $d(W, e')\leq d(e,e')<\epsilon/4n$. This contradicts the assumption that $d(W,\partial_X(U_{i_N}))\geq\epsilon/4n$ at the beginning.
\end{proof}

\section{Sensitivity and transitive open subsystem}

\begin{lem}\label{regular curve}
Let $X$ be a regular curve. Then for every constant $c>0$ there is a finite set $A\subset X$ such that the diameter of each component of $X\setminus A$ is not greater than $c$.
\end{lem}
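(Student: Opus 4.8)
The plan is to exploit the defining property of a regular curve at every point together with compactness. First I would fix $c>0$. For each $x\in X$, use regularity to choose a connected open neighborhood $V_x$ of $x$ with $|\partial_X(V_x)|<\infty$ and, by shrinking, with $\operatorname{diam}(V_x)<c$; here I also use that regular curves are locally connected, so the quasicomponent/component of a small neighborhood containing $x$ is again open, allowing me to take $V_x$ connected. The family $\{V_x:x\in X\}$ is an open cover of the compact space $X$, so it has a finite subcover $V_{x_1},\dots,V_{x_k}$. Set $A=\bigcup_{i=1}^k\partial_X(V_{x_i})$, which is finite.

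Next I would argue that $A$ is the desired cut set. Let $C$ be a component of $X\setminus A$; I claim $C$ is entirely contained in one of the $V_{x_i}$, which immediately gives $\operatorname{diam}(C)\le\operatorname{diam}(V_{x_i})<c$. Since $X$ is locally connected, $X\setminus A$ is open and its components $C$ are open. Pick any $p\in C$; then $p\in V_{x_i}$ for some $i$. I want $C\subset V_{x_i}$. Suppose not; then since $C$ is connected and meets both $V_{x_i}$ and its complement, $C$ meets $\partial_X(V_{x_i})$. But $\partial_X(V_{x_i})\subset A$ and $C\cap A=\emptyset$, a contradiction. Hence $C\subset V_{x_i}$ and $\operatorname{diam}(C)\le c$ (in fact $<c$).

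The one point requiring a little care — and the place I'd expect the main obstacle — is the step where I replace an arbitrary small neighborhood with finite boundary by a \emph{connected} one with finite boundary while keeping the boundary finite and the diameter small. The clean way is: given $x$, first pick by regularity an open $U\ni x$ with $|\partial_X(U)|<\infty$ and $\operatorname{diam}(U)<c$; then, because $X$ is locally connected, the component $V_x$ of $U$ containing $x$ is open, and by the Boundary Bumping Theorem (Theorem \ref{Boundary bumping}) — or more directly by the fact that $\partial_X(V_x)\subset\partial_X(U)$ for a component $V_x$ of the open set $U$ — we get $|\partial_X(V_x)|\le|\partial_X(U)|<\infty$, with $\operatorname{diam}(V_x)\le\operatorname{diam}(U)<c$. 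With that in hand, the compactness argument and the component-containment argument above go through routinely. (Strictly speaking, finiteness of $\partial_X(V_x)$ is not even needed for the conclusion, only finiteness of $A$, which follows from the finite subcover; but it costs nothing and keeps the picture transparent.)
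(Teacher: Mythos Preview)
Your proof is correct and follows essentially the same approach as the paper: choose at each point a small open neighborhood with finite boundary, extract a finite subcover by compactness, take $A$ to be the union of the finitely many boundaries, and observe that any component of $X\setminus A$ is trapped inside one of the covering sets. The only difference is that you take extra care to make each $V_x$ connected, whereas the paper does not; as you yourself note at the end, this is unnecessary, since the containment argument only uses connectedness of $C$, not of $V_{x_i}$.
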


\begin{proof}
Since $X$ is a regular curve, for every $x\in X$, there is an open neighborhood $V_x$ of $x$ such that $\partial_X (V_x)$ is finite and ${\rm diam}(V_x)\leq c$. Then the family $\mathcal{V}=\{V_x : x\in X\}$ is an open cover of $X$. By the compactness of $X$, there is a finite subcover $\{V_{x_i}\}_{i=1}^{n}\subset \mathcal{V}$ of $X$. Thus $\bigcup_{i=1}^{n}\partial_X(V_{x_i})$ is finite, which is denoted by $A$. For each component $C$ of $X\setminus A$, there exists some $V_{x_i}$ such that $C\cap V_{x_i}\neq\emptyset$. Since $C$ is connected and $C\cap \partial_X(V_{x_i})=\emptyset$, $C\subset V_{x_i}$. This implies that ${\rm diam}(C)\leq c$.
\end{proof}

The following proposition discusses the relations between sensitivity and the existence of transitive open subsystems for group actions on regular curves.

\begin{prop}\label{transitive open set}
Let $X$ be a regular curve and let $G$ be a group acting on $X$ sensitively. Then there is a $G$-invariant
open set $V$ in $X$ such that the restriction $(V, G|_V)$ is transitive.
\end{prop}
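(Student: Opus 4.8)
The plan is to produce the $G$-invariant transitive open set as a maximal element of a suitable collection of open subsets, using sensitivity to guarantee that such an element is nonempty and in fact transitive on itself. First I would fix a sensitivity constant $c>0$ for $(X,G)$ and, invoking Lemma~\ref{regular curve}, choose a finite set $A\subset X$ so that every component of $X\setminus A$ has diameter at most $c/2$ (say). The key observation is that for any nonempty open $U\subset X$, sensitivity gives $g\in G$ with $\mathrm{diam}(g(U))>c$, so $g(U)$ cannot be contained in any single component of $X\setminus A$; hence $g(U)$ meets $A$, and therefore $U$ meets $g^{-1}(A)\subset\bigcup_{h\in G}h^{-1}(A)=:Z$. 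Since this holds for every nonempty open $U$, the $G$-invariant set $Z$ is dense in $X$; equivalently, its complement contains no nonempty open set. This density is the engine that drives transitivity.

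Next I would consider the family $\mathcal{F}$ of all open sets $W\subset X$ such that $\overline{gW}\cap W'=\emptyset$ is \emph{not} forced — more precisely, I would look at the $G$-invariant open set $V:=\bigcup\{\,GW : W$ open, $(GW,G|_{GW})$ transitive$\,\}$, or alternatively build $V$ directly as follows. Call a nonempty open set $U$ \emph{wandering-free} if for every pair of nonempty open $U_1,U_2\subset U$ there is $g\in G$ with $gU_1\cap U_2\neq\emptyset$; this is exactly transitivity of $(GU,G|_{GU})$ restricted appropriately. Using Zorn's lemma on the union of increasing chains of $G$-invariant open transitive pieces, one gets a maximal $G$-invariant open set $V$ on which the action is transitive. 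The content is then to show $V\neq\emptyset$: if $V=\emptyset$, then in particular no nonempty open set generates a transitive orbit of opens, which means every nonempty open $U$ contains two nonempty opens $U_1,U_2$ with $g U_1 \cap U_2 = \emptyset$ for all $g\in G$, i.e. $U_2$ is disjoint from the $G$-orbit of $U_1$. Iterating this and combining with the density of $Z$ should produce a decreasing sequence of open sets with pairwise $G$-disjoint behavior, and then I would apply Proposition~\ref{convergence continua} (taking the $U_i$ to be connected refinements with controlled boundary, available since $X$ is a regular curve) to extract a nonempty open $W$ sitting inside infinitely many translates — contradicting $G$-disjointness.

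The main obstacle, and the step I expect to require the real work, is exactly this last extraction: converting "failure of transitivity everywhere" into a concrete sequence of connected open sets with uniformly bounded boundary cardinality and uniformly large diameter, so that Proposition~\ref{convergence continua} applies. Sensitivity only gives large-diameter images $g(U)$, not large-diameter sets with boundary control; so I would pass from $g(U)$ to a connected open subset via the local structure of the regular curve and then apply Lemma~\ref{large subcontinua} to obtain a connected open piece $V$ with $\mathrm{diam}(V)\geq \epsilon/4n$ and $d(V,\partial)\geq\epsilon/4n$. Carrying the boundary bound $|\partial_X(\cdot)|=n$ along the orbit is the delicate point, since homeomorphisms preserve neither $n$ nor diameters in a controlled way; here one must instead track the sets $g(U)$ themselves and intersect with small connected neighborhoods coming from regularity of $X$. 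Once the sequence is in place, Proposition~\ref{convergence continua} yields the nested open set inside infinitely many members, the orbit-disjointness is violated, and the contradiction closes the argument, leaving $V\neq\emptyset$ as desired.
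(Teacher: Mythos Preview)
Your first paragraph already contains the whole proof, but you do not draw the conclusion. You show (modulo passing to a connected open subset so that $g(U)$ is connected) that the $G$-invariant set $Z=GA$ is dense in $X$, with $A$ \emph{finite}. That finiteness is the point you miss: if every $\overline{Ga}$ had empty interior it would be nowhere dense, and a finite union of nowhere dense sets is nowhere dense, contradicting the density of $Z$. Hence some $a\in A$ has $\mathrm{Int}(\overline{Ga})\neq\emptyset$, and $V:=\mathrm{Int}(\overline{Ga})$ is a nonempty $G$-invariant open set on which $G$ acts transitively, since the single orbit $Ga$ is dense in $V$. This is exactly the paper's argument; no Zorn, no Proposition~\ref{convergence continua}.

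Your second and third paragraphs are a detour with real gaps. First, the implication ``$V=\emptyset$ $\Rightarrow$ every nonempty open $U$ contains $U_1,U_2$ with $gU_1\cap U_2=\emptyset$ for all $g$'' is not what failure of transitivity on the $G$-invariant set $GU$ gives you: the disjoint pair lives in $GU$, not in $U$. Second, and more seriously, Proposition~\ref{convergence continua} requires a \emph{uniform} bound $|\partial_X(U_i)|\leq n$, but the present proposition is stated for an arbitrary regular curve with no such bound available; you note this yourself (``carrying the boundary bound $\ldots$ is the delicate point'') but never resolve it. Finally, even if Proposition~\ref{convergence continua} applied, its output is an open $W$ contained in infinitely many sets $U_i$ of your sequence, not in infinitely many $G$-translates of a single set, so no contradiction with ``$G$-disjointness'' follows from it as stated. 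The correct route is the one-line Baire-type argument above.
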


\begin{proof}
Let $c$ be a sensitivity constant of the action $(X,G)$. By Lemma~\ref{regular curve}, there is a finite set $A\subset X$ such that each component of $X\setminus A$
has diameter $\leq c$. We claim that $\overline{G(A)}=\bigcup_{a\in A}\overline{Ga}$ has non-empty interior. Otherwise,  $\overline{G(A)}$ is nowhere dense in $X$. Thus $X\setminus \overline{G(A)}$ is a $G$-invariant dense open subset of $X$. Noting that $X$ is locally connected, each component $U$ of $X\setminus \overline{G(A)}$ is
open in $X$, and for each $g\in G$, $g(U)\subset X\setminus\overline{G(A)}$. This implies that ${\rm diam}(g(U))\leq c$, which contradicts
the assumption that $c$ is a sensitivity constant of $(X,G)$. Thus the claim holds.

By the claim, there is an $a\in A$ such that ${\rm Int}(\overline{Ga})\neq\emptyset$. Let $V={\rm Int}(\overline{Ga})$. Then $V$ is a $G$-invariant open set and $(V,G|_V)$ is transitive.
\end{proof}

\section{Existence of semi ping pong}

Now let us prove the main theorem \ref{Main Theorem}. According to the discussions in Section 2.2, we need only to show the existence of semi ping pong.\vspace{2mm}


\begin{proof}
 By Proposition~\ref{transitive open set}, there is a $G$-invariant open subset $V\subset X$ such that $(V,G|_V)$ is transitive. Let $x_0$ be a transitive point of $(V,G|_V)$. Noting that $X$ is a regular curve of almost $\leq n$ order, for each  $m\in \mathbb{N}$, there is a nonempty open set $V_m\subset B(x_0,1/m)\cap V$ with $|\partial_X(V_m)|\leq n$. By sensitivity, there exists some $g_m\in G$ such that ${\rm diam}(g_m(V_m))>c$, where $c$ is a sensitivity constant of $(X,G)$. Clearly, $|\partial_X(g_m(V_m))|\leq n$ for every $m\in \mathbb{N}$.

Applying Proposition~\ref{convergence continua} to $(g_m (V_m))_{m=1}^{\infty}$, we get a subsequence $(g_{m_k}(V_{m_k}))_{k=1}^{\infty}$ and a nonempty open subset $D\subset X$ such that $D\subset \bigcap g_{m_k}(V_{m_k})$. Since $g_{m_k}^{-1}(D)\subset V_{m_k}\subset B(x_0, 1/m_k)$ for each $k$,  we have
$$\lim_{k\to\infty}g_{m_k}^{-1}(D)=\{x_0\}.\eqno(5.1)$$

Choose two nonempty open subsets $B_1, B_2\subset D$ with $B_1\cap B_2=\emptyset$. Since $x_0$ is a transitive point of $(V,G|_V)$, there are $f,h\in G$ such that $f(x_0)\in B_1$ and $h(x_0)\in B_2$. According to $(5.1)$ and the continuity of $f$ and $h$, there are $k_1$ and $k_2$ such that $fg_{m_{k_1}}^{-1}(D)\subset B_1$ and $hg_{m_{k_2}}^{-1}(D)\subset B_2$. Thus $\{D,B_1,B_2,fg_{m_{k_1}}^{-1},hg_{m_{k_2}}^{-1}\}$ becomes a semi ping pong. Thus we complete the proof.
\end{proof}

\subsection*{Acknowledgements}
The work is supported by NSFC (No. 11771318, 11790274).


\end{document}